 \newtheorem{thm}{Theorem}[section]
 \newtheorem{cor}[thm]{Corollary}
 \newtheorem{lem}[thm]{Lemma}
 \theoremstyle{definition}
 \theoremstyle{remark}
 \numberwithin{equation}{section}
\begin{document}

\title[The pressure in a deep-water Stokes wave of greatest height]
 {The pressure in a deep-water Stokes wave of greatest height}

\author[Tony Lyons]{}

\email{tlyons@ucc.ie}

\thanks{The author was supported by the Irish Research Council, Government of Ireland Postdoctoral Fellowship GOIPD/2014/34}

\subjclass[2010]{35Q31, 35Q35, 76B99}

\keywords{Euler's Equation, Fluid Pressure, Maximum Principles, Weak Solutions.}

\date{}

\maketitle
\centerline{\scshape Tony Lyons}
\medskip
{\footnotesize
 \centerline{School of Mathematical Sciences}
   \centerline{University College Cork}
   \centerline{Cork, Ireland}
}

\begin{abstract}
In this paper we investigate the qualitative behaviour of the pressure function beneath an extreme Stokes wave over infinite depth. The presence of a stagnation point at the wave-crest of an extreme Stokes wave introduces a number of mathematical difficulties resulting in the irregularity of the free surface profile. It will be proven that the pressure decreases in the horizontal direction between a crest-line and subsequent trough-line, except along these lines themselves where the pressure is stationary with respect to the horizontal coordinate. In addition we will prove that the pressure strictly increases with depth throughout the fluid body.
\end{abstract}

\section{Introduction}\label{Sec1}
In the current paper we prove several qualitative features of the pressure function in an extreme Stokes wave over infinite depth. Extreme Stokes waves, or the Stokes wave of greatest height, are defined by the appearance of a stagnation point at the wave crest. The presence of a stagnation point at the wave-crest of an extreme Stokes wave introduces a number of mathematical difficulties resulting in the irregularity of the free surface profile \cite{Con2012, Tol1996, Con2011}. We circumvent this difficulty by excising the stagnation points and then employing a uniform limiting process. We use recent results concerning the velocity field in deep-water extreme Stokes waves which were obtained by the author in \cite{Lyo2014}, and which generalise analogous results for regular deep-water Stokes waves in \cite{Hen2011}.

A Stokes wave is a surface gravity wave, characterised by a steady, periodic, irrotational flow symmetric about the crest-line, propagating in an incompressible and inviscid fluid.  In the work \cite{Sto1880} Stokes conjectured the existence of a more exotic species of such waves, incorporating a stagnation point at the wave crest, where the fluid velocity is $(0,0)$ in the moving frame, the frame which moves with the wave speed $c>0$. The existence of the wave of greatest height as a steady, periodic two-dimensional flow in deep-water, was established in the work \cite{Tol1978} by means of sequences of regular Stokes waves tending to the extreme Stokes wave. The work of Amick et al. \cite{AFT1982} established the regularity of the free surface profile between successive crests for these extreme Stokes waves along with proving the conjecture of Stokes that these wave crests contain an angle of opening of $120^{\circ}$. The analytical approaches of these contributions  rely heavily on the irrotationality of the flow--- the incorporation of the effects of vorticity require alternative approaches and formulations cf. \cite{CS2004, CS2007, CSS2006}.

Mathematically and experimentally the properties of the pressure profile throughout a fluid body are of interest for several reasons, for example the pressure within a fluid body plays a significant role in governing particle trajectories therein, cf. \cite{Con2006,CS2010, Hen2006, Hen2008} for some recent results concerning particle trajectories in regular Stokes waves. While there are many papers which address the pressure profile beneath Stokes waves, few are concerned with the exact nonlinear theory of the flow. Nevertheless, the recent works \cite{CS2010} and \cite{Hen2011} have established the qualitative behaviour of the pressure from the fully nonlinear governing equations for regular Stokes waves in finite and infinite depth respectively.  The pressure profile for surface waves of small amplitude such as those arising as solutions of the KdV equation may be found in \cite{Joh1997}, while numerical and experimental results for nonlinear surface gravity waves are presented in \cite{EF1996} and \cite{SCJ2001} respectively.

Measurements of the fluid velocity and pressure beneath ocean surface waves are crucial to the design of cost-effective offshore structures (see \cite{DKD1992} where predictions of the forces exerted on offshore cylindrical structures are compared with those predicted by linear models of surface wind waves). In addition, given the practical applications relating the pressure measurements in water waves over finite depth to the surface profiles of these flows (see \cite{Cla2013, CC2013, Con2012a, DHOV2011, ES2008, Hen2013, OVDH2012} for some recent results this regard), the pressure profile of deep-water flows is also a practical subject worthy of further study.  Indeed, while pressure measurements are typically taken from pressure transducers which are lain along the sea-bed, this has some practical drawbacks due to the complex dynamics of the bottom boundary layer. An alternative approach which is sometimes employed is to instead use transducers suspended at a fixed-depth beneath the free-surface.

In this paper we will investigate the pressure profile of the fully nonlinear model for extreme Stokes waves over infinite depth. It will be shown that the pressure is strictly decreasing between a crest-line and subsequent trough-line within the fluid body, except along the crest-line and trough-line themselves, where the pressure is stationary with respect to the horizontal coordinate. Likewise, along the free surface it will be shown that the pressure is strictly decreasing as we move horizontally between a wave crest and subsequent wave trough, except at the crest and trough themselves, where yet again the pressure is stationary in the horizontal direction. Lastly, it will be shown that the pressure is a strictly increasing function of depth at all points throughout the fluid domain.

In Section \ref{Sec2} we present the governing equations and associated boundary conditions describing the two-dimensional fluid flow and discuss some a priori features of the velocity field and free surface profile inherent to Stokes waves.
In Section \ref{Sec3} we introduce the hodograph transform by means of a pair of conformal maps, namely the stream function and the velocity potential. While the hodograph transform is conformal almost everywhere in the fluid domain, the presence of a stagnation point at the wave crest means the map is no longer regular at that point, a difficulty that will be overcome in this paper. In Section \ref{Sec4} we present some results concerning the velocity field throughout the fluid domain, which in turn will be used in Section \ref{Sec5} to prove that the pressure decreases between any crest-line and subsequent trough-line, except along these lines themselves. Additionally it will be shown that the pressure is strictly increasing with depth at all points throughout the fluid.

\section{The Governing Equations}\label{Sec2}
Stokes waves are steady, periodic flows, which move with fixed speed, $c>0$ say, with respect to the physical frame whose coordinates we denote by $(X,Y,Z)$. Since the flow is steady, it is always possible to construct a reference frame moving relative to the physical frame in which the wave form is fixed, namely the moving frame. The points of the moving frame are labelled with coordinates $(x,y,z)$, while the transformation between the physical frame and the moving frame coordinates is given by
\begin{equation*}
x = X-ct\quad y = Y\quad z=Z.
\end{equation*}
In this frame of reference the wave profile appears fixed and consequently the governing equations for the fluid flow depend on time implicity, and are written according to
\begin{equation}\label{s2eq1}
\begin{aligned}
    &(u-c)u_x + vu_y = -P_x\\
    &(u-c)v_x+vv_y = -P_y-g
\end{aligned}
\end{equation}
in $\Omega=\left\{(x,y)\in\mathbb{R}\times(-\infty,\eta(x))\right\}$, the interior of the fluid domain. Here and throughout it is to be understood that the  $u,\ v$ and $P$ are functions of the $(x,y)$-variables. The associated boundary conditions are given by
\begin{equation}\label{s2eq2}
\begin{split}
& P=P_0\quad \text{and}\quad v=(u-c)\eta^{\prime}\quad \text{on } y=\eta\\
&(u,v)\to(0,0)\quad\text{uniformly in $x$ as $y\to-\infty$},
\end{split}
\end{equation}
where $\eta:=\eta(x)$ and the constant $P_0$ denotes the atmospheric pressure exerted on the free surface by the air column above.
Throughout this paper we consider Stokes waves as irrotational flows in an ideal fluid (incompressible, inviscid) in the absence of surface tension. As such we also require that the fluid velocity satisfy
\begin{equation}\label{s2eq3}
    u_x + v_y = 0\quad\text{and}\quad u_y - v_x = 0\quad\text{in } \Omega
\end{equation}
which ensure incompressibility and irrotationality respectively. We note that the incompressibility and irrotationality conditions simultaneously imposed on either $u$ or $v$ ensure that both components of the velocity field are harmonic in $\Omega$, that is to say
\begin{equation}
    \Delta u = \Delta v = 0 \quad \text{in } \Omega.
\end{equation}
Being periodic in the horizontal direction, Stokes waves have an associated wavelength which is defined to be the distance between two successive wave crests and which we set as $\lambda=2\pi$ without loss of generality. In the moving frame a wave-crest will be fixed at the location $x=0$ with wave-troughs at $x=\pm \pi$. Therefore the horizontal fluid velocity $u$, the surface profile $\eta$ and the pressure $P$ are all symmetric about $\left\{x=0\right\}$, while the function $\eta$ is also convex for $x\in[0,2\pi]$, cf. \cite{PT2004}.  Meanwhile the vertical fluid velocity $v$ is odd with respect to $x$. As such we need only consider the behaviour of these functions in a semi-infinite strip of width $\lambda$, namely $\Omega_\lambda = \left\{(x,y):x\in(-\pi,\pi),y\in(-\infty,\eta(x))\right\}$, and we define
\begin{equation*}
\begin{split}
&\Omega_{-} = \left\{(x,y):\ -\pi<x<0,-\infty<y<\eta(x)\right\}\\
&\Omega_{+} = \left\{(x,y):\ 0<x<\pi,-\infty<y<\eta(x)\right\}.
\end{split}
\end{equation*}
The regions $\Omega_{\pm}$ are bounded above by the free surface sections given by
\begin{equation*}
\begin{split}
&S_{-} = \left\{(x,y):\ -\pi<x<0,y=\eta(x)\right\}\\
&S_{+} = \left\{(x,y):\ 0<x<\pi,y=\eta(x)\right\}.
\end{split}
\end{equation*}
Periodicity and (anti)symmetry of the functions  $u$, $v$, $\eta$, $P$ will allow us to extend our considerations in $\overline{\Omega}_+$ throughout the fluid domain $\overline{\Omega}$.

In the case of regular, periodic, steady waves that are not near breaking, we have $u-c<0$ at the wave crest, cf. \cite{Con2011, Tol1996}. Thus for regular Stokes waves we always have
\begin{equation}\label{s2eq4.0}
    u-c<0\quad\text{on } \overline{\Omega},
\end{equation}
with the harmonic function $u$ attaining its maximum at the wave-crest. In an extreme Stokes wave, where we have $u=c$ at the wave-crest, forming a so called stagnation point (although see \cite{Con2011} for a discussion of the fluid particle behaviour at the stagnation point), relation \eqref{s2eq4.0} becomes a non-strict inequality
\begin{equation}\label{s2eq4}
u-c\leq 0\quad\text{on }\overline{\Omega},
\end{equation}
with equality achieved at the wave-cusp cf. \cite{AFT1982}.

The presence of a stagnation point on the free surface presents a number of mathematical difficulties when we try to impose maximum principles on the velocity field $(u,v)$. However a recently developed approach \cite{Con2012, Lyo2014} has allowed us to circumvent these difficulties by applying maximum principles in an excised domain. We then analyse the dynamical variables $u$ and $v$ and $P$ in a domain with the offending stagnation point removed, and obtain corresponding results for extreme Stokes waves by way of a uniform limiting process.

\section{The Hodograph Transform}\label{Sec3}
From the incompressibility of the fluid flow we may introduce a stream function $\psi$ defined by
\begin{equation}\label{s3eq1}
    \psi_y = u-c\quad\text{and}\quad\psi_x=-v.
\end{equation}
where the irrotationality of the fluid flow, cf. equation \eqref{s2eq3}, also ensures that the stream function $\psi$ is harmonic throughout the fluid domain. Integrating equations \eqref{s3eq1} and using the symmetry properties of the velocity field, the stream function is found to be periodic in the $x$-variable. Furthermore upon integrating equation \eqref{s2eq1} the Euler equation may be equivalently written as Bernoulli's Law which is given by
\begin{equation}\label{s3eq2}
\frac{(u-c)^2+v^2}{2} + gy + P = Q,
\end{equation}
where $Q$ is constant throughout the fluid domain $\overline{\Omega}$. Bernoulli's law evaluated along the free surface allows us to reformulate the free boundary problem presented in equations \eqref{s2eq1}--\eqref{s2eq3} according to
\begin{equation}\label{s3eq3}
\begin{split}
&\Delta\psi = 0\quad \text{on } (x,y)\in \Omega\\
&\vert{\nabla\psi}\vert^2 + 2gy = E\quad\text{and}\quad\psi = 0\quad \text{on } y=\eta(x)\\
&\nabla\psi\to(0,-c)\quad \text{uniformly in $x$ as $y\to-\infty$},
\end{split}
\end{equation}
where we introduce $E=Q-P_0$ which is constant throughout $\overline{\Omega}$.

In addition to the stream function, the irrotationality of the flow also ensures the existence of a velocity potential defined by
\begin{equation}\label{s3eq4}
\phi_x = u-c\quad\text{and}\quad\phi_y = v.
\end{equation}
The incompressibility condition, cf. equation \eqref{s2eq3}, ensures that the velocity potential is also harmonic in the fluid domain, while integrating \eqref{s3eq4} and imposing the symmetries of the velocity field, we deduce that the function $\phi+cx$ is $2\pi$-periodic in the horizontal direction.

Having defined a pair of harmonic conjugates, we now use $\phi$ and $\psi$  to construct the conformal map $\mathcal{H}:\Omega\to\hat{\Omega}$, where $(q,p)\in\hat{\Omega}$ is given by
\begin{equation}\label{s3eq5}
    q=-\phi(x,y)\quad\text{and}\quad p=-\psi(x,y),
\end{equation}
the so called hodograph transform \cite{CS2010}.
Under this coordinate transformation, the domain $\Omega$ whose boundary is the free surface, transforms to the domain $\hat{\Omega}$ whose boundary is simply a horizontal line, hence transforming between an unknown domain and a fixed domain. The images of the interior regions $\Omega_\pm$ are given by
\begin{equation}\label{s3eq6}
\begin{split}
    &\hat{\Omega}_- = \left\{(q,p)\in \mathbb{R}^2: q\in (-c\pi,0),p\in(-\infty,0)\right\}\\
    &\hat{\Omega}_+ = \left\{(q,p)\in \mathbb{R}^2: q\in (0,c\pi),p\in(-\infty,0)\right\}.
\end{split}
\end{equation}
Meanwhile the image of the free surface sections $S_{\pm}$ are given by
\begin{equation}\label{s3eq7}
\begin{split}
    &\hat{S}_- = \left\{q\in(-c\pi,0), p = 0\right\}\\
    &\hat{S}_+ = \left\{q\in(0,c\pi), p = 0\right\}.
\end{split}
\end{equation}
The hodograph transform between the fluid domain and the conformal domain is illustrated in Figure \ref{hodo}:
\begin{figure}[h!]
\centering
\includegraphics[trim = 4cm 12.7cm 5cm 11.5cm, clip=true, width=0.999\textwidth]{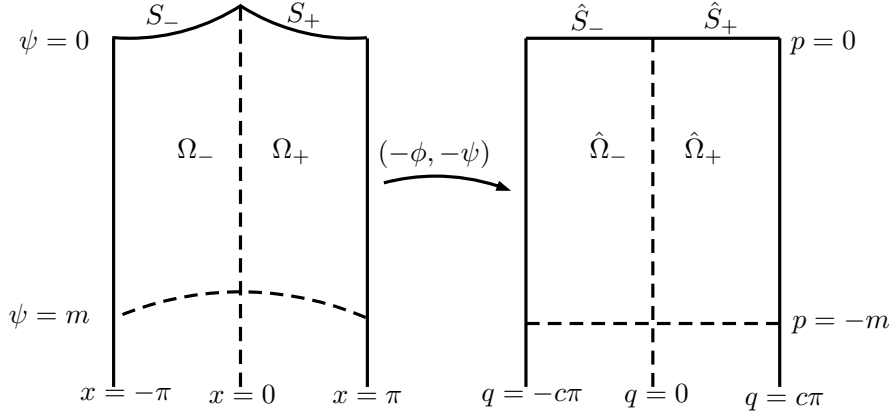}
\caption{The hodograph transform between the free boundary domain ${\overline{\Omega}}$ and the fixed domain $\overline{\hat{\Omega}}$.}
\label{hodo}
\end{figure}

We introduce the height function $h(q,p)$, such that
\begin{equation}\label{s3eq8}
    h(q,p)= y.
\end{equation}
We note that since $h$ is harmonic in the $(x,y)$-variables and the coordinate transformation \eqref{s3eq5} is conformal in the interior of $\hat{\Omega},$ then $h$ is also harmonic in the $(q,p)$-variables.
We employ $h$ and Bernoulli's Law, cf. equation \eqref{s3eq2}, to reformulate the free boundary problem given by equation \eqref{s3eq7} in the conformal domain $\hat{\Omega}$, as follows
\begin{equation}\label{s3eq9}
\begin{split}
&\Delta_{q,p}h=0\quad\text{on } \hat{\Omega}_+\\
&2(E-gh)(h_q^2+h_p^2) = 1\quad\text{on }p=0\\
&\nabla_{q,p}h\to\left(0,\frac{1}{c}\right)\quad\text{uniformly in $q$ as $p\to-\infty$}.
\end{split}
\end{equation}
The function $h$ is even and periodic with respect to $q$ with a period of $2\pi c$.
The transformation between the moving frame coordinates $(x,y)$ and the conformal coordinates $(q,p)$ is given by the following relations:
\begin{equation}\label{s3eq10}
\begin{split}
\left.
\begin{split}
&\partial_q = h_p\partial_x+h_q\partial_y\\
&\partial_p = -h_q\partial_x+h_p\partial_y
\end{split}
\right.
\qquad\text{and}\qquad
\left.
\begin{split}
&\partial_x = (c-u)\partial_q+v\partial_p\\
&\partial_y = -v\partial_q+(c-u)\partial_p
\end{split}
\right.\\
\end{split}
\end{equation}
with
\[h_q = -\frac{v}{(c-u)^2 + v^2}\qquad\text{and}\qquad h_p=\frac{c-u}{(c-u)^2+v^2}.\]

\section{The Velocity Field in the Fluid Domain}\label{Sec4}
\subsection{The Conformal Domain}
In the analysis which follows, it will be necessary to apply maximum principles in the conformal domain $\hat{\Omega}_+$. However, the fact that the closure of this domain harbours a stagnation point at $(q,p)=(0,0)$ means the velocity field does not possess the necessary regularity to impose the strong maximum principle and Hopf's maximum principle  to $u$ and $v$ in any neighbourhood of the wave crest. Nevertheless, in line with the methods developed in \cite{Con2012,Lyo2014} it is possible to apply these maximum principles in the excised conformal domain $\hat{\Omega}_+^{\varepsilon}$ illustrated in Figure \ref{excisedregion}:
\begin{figure}[h!]
\centering
\includegraphics[trim = 5cm 12.7cm 5cm 11cm, clip=true]{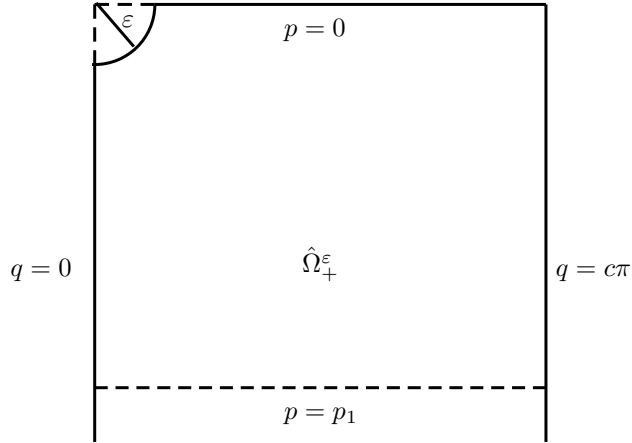}
\caption{The excised conformal domain $\hat{\Omega}_+^{\varepsilon}$ truncated by the streamline $p=p_1$, where $p_1<0$.}
\label{excisedregion}
\end{figure}

By means of the uniform limiting process outlined below, we extend these results to the region $\hat{\Omega}_+$ and hence $\Omega_+$ also.

\subsubsection{The Uniform Limiting Process}\label{Sec4.1.1}
We define $h_0:=h(q,0)$ and note that there is a sequence $\left\{h_{n}:n\geq 1\right\}$ of regular Stokes waves which converges weakly to $h_0$ in the Sobolev space $W_{\text{per}}^{1,k}(\mathbb{R})$, for $k\in(1,3)$, cf. \cite{BT2003}. Consequently this sequence of regular Stokes waves approaches the extreme wave in the H\"{o}lder space $C_{\text{per}}^{0,\alpha}(\mathbb{R})$ with $\alpha\in(0,\frac{2}{3})$, cf. \cite{Con2012}.
We introduce the variable $\xi = q+ip$ and the function
\begin{equation}
\begin{split}
    &\gamma: \hat{\Omega}_+ \to \Omega_+\\
    &\gamma:\xi \mapsto x+iy,
\end{split}
\end{equation}
which is analytic in $\hat{\Omega}_+$ and continuous on the boundary $\partial\hat{\Omega}_+$. Indeed the function $\gamma$ has an analytic continuation to any point on the boundary of $\hat{\Omega}_+$, except the stagnation point $(q,p)=(0,0)$. An application of Privalov's theorem in a semi-infinite, horizontal strip in the complex plane, as per \cite{CV2011}, then ensures the sequence $\left\{x+iy_n=\gamma_n:n\geq1\right\}$ approaches  $\gamma$ in $C_{\text{per}}(\mathbb{R}\times[-m,0])$ (the subscript per denoting periodicity with respect to $q$) cf. \cite{Con2012}, by means of the Hilbert transform of $\psi$ on this strip. To obtain the semi-infinite domain we let $m\to-\infty$. In this case the corresponding Hilbert transform may be written in terms of the usual Fourier coefficients of the periodic function $\psi$ cf. \cite{CV2011}, and it follows that $\gamma_{n}\to \gamma$ in $C_{\text{per}}^{0,\alpha}(\mathbb{R}\times(-\infty,0])$ also. Hence an infinite depth extreme wave may be obtained as a limit of regular Stokes waves. The existence of Stokes waves presenting cusps with a contained angle of 120 degrees, along with the convexity of the surface profile between successive crests, have been established by means of such sequences of regular Stokes waves tending to the wave of greatest height, see \cite{AFT1982, BT2003, PT2004} for further relevant discussions.

\subsection{Properties of the Velocity Field}
In the recent publication \cite{Lyo2014} the author established results concerning the velocity field in the free boundary domain, obtained by means of maximum principles applied in the excised conformal domain, and extended to the fluid domain by means of the uniform limiting process. We now present several results relevant to the analysis of the fluid pressure which follows in Section \ref{Sec5}.
\begin{lem}[cf. \cite{Lyo2014}]\label{s4lem1}
The velocity field $(u,v)$ is continuous throughout the closure of the fluid domain $\overline{\Omega}$, including the wave crest.
\end{lem}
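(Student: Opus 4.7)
My plan splits the proof into three stages. \emph{First}, I reduce to continuity at a single crest. The free surface profile $\eta$ is real-analytic away from the crests by \cite{AFT1982}, the Bernoulli boundary condition is analytic, and $u,v$ are harmonic, so standard elliptic boundary regularity gives $(u,v)\in C^{\infty}(\overline{\Omega}\setminus\text{\{crests\}})$. By the periodicity and parity recalled in Section~\ref{Sec2} it thus suffices to establish continuity at the crest $(0,\eta(0))$ as it is approached from within $\overline{\Omega}_{+}$.

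\emph{Second}, I invoke the Stokes corner asymptotic. The half fluid region $\Omega_{+}$ has contained angle $\pi/3$ at the crest (the vertical symmetry line bisects the $120^{\circ}$ Stokes angle), while the preimage $\hat{\Omega}_{+}$ has a $\pi/2$ convex corner at the origin. The analytic uniformization $\gamma:\hat{\Omega}_{+}\to\Omega_{+}$ built from the hodograph of Section~\ref{Sec3} therefore admits the classical corner expansion
\begin{equation*}
\gamma(\xi)-\gamma(0)=K\,\xi^{2/3}\bigl(1+o(1)\bigr),\qquad \xi\to 0,\ \xi\in\overline{\hat{\Omega}}_{+},
\end{equation*}
with $K\neq 0$. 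Justifying this expansion in the infinite-depth setting is the main obstacle of the proof: it is handled via the uniform limiting process of Section~\ref{Sec4.1.1}, where each approximating regular Stokes wave satisfies the analogous asymptotic on a non-degenerate neighbourhood of the crest and the $C^{0,\alpha}_{\text{per}}$ convergence $\gamma_{n}\to\gamma$ carries the asymptotic through to the limit.

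\emph{Finally}, I differentiate the expansion and combine it with the identity
\begin{equation*}
\frac{d\gamma}{d\xi}\;=\;h_{p}+ih_{q}\;=\;\frac{1}{(c-u)+iv}
\end{equation*}
recorded at the close of Section~\ref{Sec3}. This yields
\begin{equation*}
(c-u)(x,y)+i\,v(x,y)\;=\;\frac{3}{2K}\,\xi^{1/3}\bigl(1+o(1)\bigr)\longrightarrow 0\qquad\text{as }\xi\to 0.
\end{equation*}
Since $\gamma$ is a continuous bijection from $\overline{\hat{\Omega}}_{+}$ onto $\overline{\Omega}_{+}$ whose restriction to a compact neighbourhood of the crest is a homeomorphism, any sequence $(x_{n},y_{n})\to(0,\eta(0))$ in $\overline{\Omega}_{+}$ forces $\xi_{n}\to 0$, and hence $(u,v)(x_{n},y_{n})\to(c,0)$, which proves the lemma.
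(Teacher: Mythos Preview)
The paper does not supply its own proof of this lemma; it is quoted from \cite{Lyo2014} as an established result, so your proposal must stand on its own merits.

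Stages~1 and~3 are sound. The reduction to a single crest is routine, and once the corner expansion $\gamma(\xi)-\gamma(0)=K\xi^{2/3}(1+o(1))$ is available, the identity $\gamma'(\xi)=1/\bigl((c-u)+iv\bigr)$ indeed yields $(c-u)+iv\to 0$ at the crest (modulo the standard caveat that differentiating the remainder requires the slightly sharper form of the expansion that the literature in fact supplies).

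The genuine gap is in Stage~2, exactly where you flag the main obstacle. Your proposed justification via the uniform limiting process of Section~\ref{Sec4.1.1} fails for two reasons. First, the approximating \emph{regular} Stokes waves have smooth crests with horizontal tangent, so each $\gamma_{n}$ maps the $\pi/2$ corner of $\hat{\Omega}_{+}$ to a $\pi/2$ corner of the corresponding half-domain; hence $\gamma_{n}(\xi)-\gamma_{n}(0)\sim K_{n}\xi$ near the origin, and there is no ``analogous'' $\xi^{2/3}$ behaviour along the sequence to carry to the limit. Second, even granting some expansion for each $\gamma_{n}$, the convergence available in Section~\ref{Sec4.1.1} is only $C^{0,\alpha}_{\text{per}}$ with $\alpha<2/3$, which is strictly too weak to deliver a $\xi^{2/3}$ asymptotic in the limit: the exponent $2/3$ is precisely the H\"older threshold one would need to exceed. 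The correct route is to cite the corner expansion as a theorem---it is established (in equivalent formulations) in \cite{AFT1982} and sharpened in subsequent work such as \cite{PT2004}, by arguments that do not proceed via regular-wave approximation. With that citation in place, your Stages~1 and~3 complete the argument, and this is essentially the structure used in \cite{Con2012} for finite depth and carried over in \cite{Lyo2014} to the present infinite-depth setting.
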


\begin{lem}[cf. \cite{Lyo2014}]\label{s4lem2}
In the interior of the fluid domain $\Omega_+$ and along the free surface $S_+$ we have $v>0$, while along the crest-line and trough-line we have $v=0$.
\end{lem}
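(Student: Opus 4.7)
The plan is to split the argument into three steps. Step one handles the vertical boundaries: the antisymmetry $v(-x,y)=-v(x,y)$ gives $v(0,y)=0$ along the crest-line, and combining antisymmetry with $2\pi$-periodicity gives $v(\pi,y)=v(-\pi,y)=-v(\pi,y)$, so $v$ also vanishes along the trough-line.

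Step two establishes $v>0$ on the open free-surface section $S_+$ via the kinematic boundary condition $v=(u-c)\eta'$ from \eqref{s2eq2}. The convexity of $\eta$ on $[0,2\pi]$ combined with the fact that its unique maximum occurs at $x=0$ (cf. \cite{PT2004}) forces $\eta'(x)<0$ for $x\in(0,\pi)$. On the other hand $u-c<0$ strictly away from the stagnation point---this follows from \eqref{s2eq4} together with the strong maximum principle applied to the harmonic function $u-c$ in the excised conformal domain, as carried out in \cite{Lyo2014}. Multiplying these strict inequalities gives $v=(u-c)\eta'>0$ on $S_+$.

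Step three establishes $v>0$ in the interior $\Omega_+$. I would transfer the analysis to the conformal image via the hodograph transform; since $\mathcal{H}$ is conformal on $\Omega_+$, harmonicity of $v$ in $(x,y)$ carries over to harmonicity in $(q,p)$, and I can apply the minimum principle in the excised, truncated domain
\[
\hat{\Omega}_+^{\varepsilon,p_1}=\{(q,p):0<q<c\pi,\ p_1<p<0\}\setminus \overline{B_\varepsilon(0,0)}
\]
of Figure \ref{excisedregion}. Along its boundary: $v=0$ on $\{q=0\}$ and $\{q=c\pi\}$ by symmetry; $v>0$ on $\{p=0\}\setminus \overline{B_\varepsilon}$ by Step two; $v\to 0$ uniformly on $\{p=p_1\}$ as $p_1\to-\infty$ (since $\nabla\psi\to(0,-c)$ at infinite depth); and on the small arc $\partial B_\varepsilon\cap \hat{\Omega}_+$ the continuity of $v$ on $\overline{\Omega}$ (Lemma \ref{s4lem1}) together with $v=0$ at the stagnation point gives $|v|\leq\omega(\varepsilon)$ with $\omega(\varepsilon)\to 0$ as $\varepsilon\to 0$. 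Applying the minimum principle to the harmonic function $v+\omega(\varepsilon)$ and then letting $\varepsilon\to 0$ and $p_1\to-\infty$ yields $v\geq 0$ throughout $\hat{\Omega}_+$, hence throughout $\Omega_+$. The strong maximum principle then upgrades this to $v>0$ strictly in $\Omega_+$, since $v$ is a non-constant harmonic function (being strictly positive on $S_+$).

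The principal obstacle is the loss of regularity of the hodograph transform at the stagnation point, which precludes a direct application of maximum principles on $\hat{\Omega}_+$. The excision-plus-limit scheme is tailored precisely to circumvent this, and its success rests on the continuity of $v$ at the wave crest furnished by Lemma \ref{s4lem1}, which ensures that the arc error $\omega(\varepsilon)$ vanishes in the limit.
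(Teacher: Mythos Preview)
The paper does not actually prove this lemma; it is quoted from \cite{Lyo2014} without argument, so there is no in-paper proof to compare against. Your three-step outline is sound and is precisely in the spirit of the excision-plus-maximum-principle methodology that \cite{Lyo2014} employs and that the present paper summarises in Section~\ref{Sec4}. Two small points are worth tightening. First, in Step~2, convexity of $\eta$ on $[0,2\pi]$ together with the maximum being at $x=0$ does not by itself force the \emph{strict} inequality $\eta'(x)<0$ on $(0,\pi)$; you need to invoke the real-analyticity of the free surface away from the crest (cf.~\cite{Tol1996,PT2004}) to rule out a flat piece. Second, in Step~3 the phrase ``letting $\varepsilon\to 0$ and $p_1\to-\infty$'' should be replaced by a contradiction argument: assume $v(q_0,p_0)=-\delta<0$ at some interior point, then choose $p_1<p_0$ deep enough that $|v|<\delta/2$ on $\{p=p_1\}$ and $\varepsilon$ small enough that $\omega(\varepsilon)<\delta/2$ and $(q_0,p_0)\in\hat{\Omega}_+^{\varepsilon,p_1}$; the minimum principle then gives the contradiction directly. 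With these adjustments your proof is complete.
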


\begin{lem}[cf. \cite{Lyo2014}]\label{s4lem5}
Given the function $f(x,y) := (c-u(x,y))v(x,y)-gx$, then on the free surface we have
\begin{equation}\label{s4eq2}
    \frac{d}{dx}f(x,\eta(x)) \leq 0,\quad \text{for }x\in(0,\pi)
\end{equation}
in the case of regular deep-water Stokes waves cf. \cite{Hen2011}, and in the wave of greatest height we have the same result except it is understood that at the  wave-crest the derivative is from the right-hand side only.
\end{lem}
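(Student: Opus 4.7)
The plan is to express $\frac{d}{dx}f(x,\eta(x))$ explicitly by using the kinematic and Bernoulli boundary conditions on $S_+$, and then to pin down the sign using the slope bound and convexity of the profile. For the regular-wave case the inequality is due to \cite{Hen2011}; the substance here is to extend it to the wave of greatest height, for which the uniform limit of Section~\ref{Sec4.1.1} is the key tool.

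First, using the kinematic boundary condition $v = -(c-u)\eta'$ together with the surface Bernoulli relation $(c-u)^2(1+(\eta')^2) = 2(E-g\eta)$ extracted from \eqref{s3eq3}, I would write
\begin{equation*}
f(x,\eta(x)) = -(c-u)^2\eta' - gx = -\frac{2(E-g\eta)\eta'}{1+(\eta')^2} - gx.
\end{equation*}
Differentiating this explicit expression in $x$, simplifying the resulting quotient, and re-substituting $2(E-g\eta) = (c-u)^2(1+(\eta')^2)$ once more to clean the numerator, I arrive at the identity
\begin{equation*}
\frac{d}{dx}f(x,\eta(x)) = \frac{((\eta')^2-1)\bigl[g + (c-u)^2\eta''\bigr]}{1+(\eta')^2}.
\end{equation*}

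The sign analysis on $(0,\pi)$ then follows at once. The $120^{\circ}$ angle at the crest established in \cite{AFT1982}, combined with the convexity of $\eta$ from \cite{PT2004} recalled in Section~\ref{Sec2}, ensures that $|\eta'(x)| \leq \tan(30^{\circ}) = 1/\sqrt{3}$ on $(0,\pi)$, so $(\eta')^2-1 < 0$. The same convexity gives $\eta''(x) \geq 0$ on $(0,\pi)$, and since $c-u \geq 0$ by \eqref{s2eq4} the bracket $g + (c-u)^2\eta''$ is strictly positive. The denominator is positive, so $\frac{d}{dx}f(x,\eta(x)) \leq 0$ throughout $(0,\pi)$.

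For the one-sided statement at the crest in the extreme-wave case --- where $\eta'(0^+) = -1/\sqrt{3}$ and $c-u$ vanishes --- I would invoke the approximating sequence $\{(u_n,v_n,\eta_n)\}$ of regular Stokes waves from Section~\ref{Sec4.1.1}. By \cite{Hen2011} each member satisfies the integrated monotonicity $f_n(x_2,\eta_n(x_2)) \leq f_n(x_1,\eta_n(x_1))$ for $0 \leq x_1 < x_2 \leq \pi$, and the $C^{0,\alpha}_{\text{per}}$ convergence $u_n \to u$, $v_n \to v$, $\eta_n \to \eta$ transfers this monotonicity to the limit, so $x \mapsto f(x,\eta(x))$ is non-increasing on $[0,\pi]$ and in particular has a non-positive right-derivative at the crest. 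The main obstacle is verifying that the convergence is strong enough to pass the product $(c-u_n)v_n$ uniformly to $(c-u)v$ along the free surface, which relies on the regularity inherited from \cite{BT2003} and the Privalov-based argument recalled in Section~\ref{Sec4.1.1}.
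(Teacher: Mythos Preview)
Your proof is correct, and it follows a genuinely different route from the paper. The key distinction is the choice of input: the paper expresses the surface derivative as
\[
\frac{d}{dx}f(x,\eta(x)) = (c-u)(1+\eta'^{\,2})\,v_x - g,
\]
and controls the sign by combining the Bernoulli identity with the monotonicity result $\frac{d}{dx}u(x,\eta)<0$ on $S_+$ from \cite{Lyo2014} (itself obtained by maximum principles in the excised hodograph domain), together with the slope bound $\eta'^{\,2}<1$. You instead eliminate the velocity using $v=-(c-u)\eta'$ and the surface Bernoulli relation to arrive at the identity
\[
\frac{d}{dx}f(x,\eta(x)) = \frac{(\eta'^{\,2}-1)\bigl[g+(c-u)^2\eta''\bigr]}{1+\eta'^{\,2}},
\]
which I have checked, and then read off the sign from the convexity $\eta''\geq 0$ of the extreme profile \cite{PT2004} and the crest-angle bound $|\eta'|\leq 1/\sqrt{3}<1$. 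This is more elementary in the sense that it avoids the velocity-field input from \cite{Lyo2014} entirely, reducing matters to the geometry of $\eta$; on the other hand, the paper's argument does not require convexity (only the weaker bound $\eta'^{\,2}<1$), and fits the hodograph framework used elsewhere in the paper. Your limiting argument for the one-sided crest derivative is reasonable; note, however, that your identity already delivers the \emph{strict} inequality on the open interval $(0,\pi)$, since $g+(c-u)^2\eta''\geq g>0$ there, so the approximation is only needed at the endpoint.
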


\begin{proof} We present a short proof of this result.
Differentiating $f(x,\eta(x))$ with respect to $x$ we find
\begin{equation}\label{s4lem5pf1}
\begin{split}
    \frac{d}{dx}\left[\left(c-u\right)v-gx\right] = v_x(1+\eta^{\prime\,2})(c-u)-g\quad\text{on }y=\eta(x).
\end{split}
\end{equation}
Differentiating the Bernoulli relation \eqref{s3eq2} evaluated along the free surface, we find
\begin{equation}\label{s4lem5pf2}
(c-u)v_y(1-\eta^{\prime\, 2})+2vv_x+g\eta^{\prime}=0\quad\text{on }y=\eta(x).
\end{equation}
With $v=(u-c)\eta^{\prime}$ on $S_+$ and dividing \eqref{s4lem5pf2} by $\eta^{\prime}$ we find
\begin{equation}\label{s4lem5pf3}
    (c-u)\left[\frac{v_y}{\eta^{\prime}}(1-\eta^{\prime\, 2})-2v_x\right] + g =0\quad\text{on }S_+.
\end{equation}
However along $S_+$  we have $\frac{d}{dx}u(x,\eta)<0$ cf. \cite{Lyo2014},  which yields
\begin{equation}\label{s4lem5pf4}
    \frac{v_y}{\eta^{\prime}}<v_x\quad\text{on }S_+,
\end{equation}
having made use of the incompressibility and irrotationality relations in equation \eqref{s2eq3}. In regular and extreme Stokes waves it is known that $\eta^{\prime\, 2}<1$, c.f. \cite{Ami1987, PT2004}, and as such we find
\begin{equation}\label{s4lem5pf5}
    v_x(1+\eta^{\prime\, 2})(c-u)-g<0\quad\text{on }S_+,
\end{equation}
with a strict sign inequality. It follows that $\frac{df}{dx}<0$ on $S_+$. However, since the left-hand side of \eqref{s4lem5pf1} is even with respect to $x$, then
$\frac{d}{dx}[(c-u)v-gx]<0$ on $S_\pm$, and it follows that $\frac{df}{dx}\leq 0$ on $y=\eta(x)$ by continuity, thus yielding inequality \eqref{s4eq2}.

\end{proof}

\begin{cor}[cf. \cite{Hen2011}]\label{s4cor1}
Since $f(x,\eta(x)) =0$ at $x=0$ for the wave of greatest height, Lemma \eqref{s4lem5} ensures that
\begin{equation}\label{s4eq3}
    f(x,\eta(x))\leq 0\quad \text{for } x\in[0,\pi].
\end{equation}
\end{cor}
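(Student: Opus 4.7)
The plan is a one-line integration argument based on Lemma \ref{s4lem5} together with the initial value of $f$ at the wave crest. First I would verify the stated boundary value $f(0,\eta(0))=0$. At the crest of the extreme wave we have the stagnation condition $u(0,\eta(0))=c$, and Lemma \ref{s4lem2} gives $v(0,\eta(0))=0$ along the crest-line. Substituting into the definition $f(x,y)=(c-u)v-gx$ yields $f(0,\eta(0))=(c-c)\cdot 0 - g\cdot 0 = 0$, which is the initial data we need.

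Next, I would combine this with Lemma \ref{s4lem5}, which asserts $\tfrac{d}{dx}f(x,\eta(x))\leq 0$ on $(0,\pi)$ (with a one-sided derivative at the crest). Since $f(\cdot,\eta(\cdot))$ is continuous on $[0,\pi]$ (continuity is ensured by Lemma \ref{s4lem1} together with continuity of $\eta$), the fundamental theorem of calculus applied to this monotone-nonincreasing real function of $x$ gives
\begin{equation*}
f(x,\eta(x)) = f(0,\eta(0)) + \int_0^x \frac{d}{ds}f(s,\eta(s))\,ds \leq 0 \quad\text{for } x\in[0,\pi],
\end{equation*}
which is precisely the inequality \eqref{s4eq3}.

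There is essentially no obstacle here: both the initial value and the sign of the derivative are already in hand. The only subtle point worth mentioning is the behaviour at $x=0$, where the cusp prevents us from having a two-sided derivative; however, the one-sided version of Lemma \ref{s4lem5} together with continuity of $f(\cdot,\eta(\cdot))$ at the crest is enough to start the integration from $x=0^+$ and obtain the non-strict inequality on the closed interval $[0,\pi]$.
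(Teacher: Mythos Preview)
Your argument is correct and matches the paper's reasoning: the corollary is stated in the paper without a separate proof, the justification being contained in the statement itself (initial value $f(0,\eta(0))=0$ plus monotonicity from Lemma~\ref{s4lem5}). Your write-up simply spells this out, and the only inessential redundancy is invoking the stagnation condition $u=c$ at the crest---since $v=0$ on the crest-line by Lemma~\ref{s4lem2}, one already has $(c-u)v=0$ there regardless.
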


\section{Main Results}\label{Sec5}
We now present the main results of this paper. In Theorem \ref{s4thm1} it will be proven that the pressure function is strictly decreasing in the horizontal direction between any crest-line and subsequent trough-line, except along these lines themselves where the pressure is stationary in the horizontal direction. In Theorem \ref{s4thm2} it will be proved that the pressure strictly increases with fluid depth throughout the fluid body.
\begin{thm}\label{s4thm1}
In the moving frame, the pressure satisfies $P_x<0$ in ${\Omega}_+$ and along $S_+$.  Along the crest-line $\left\{x=0:y\in(-\infty,\eta(0)]\right\}$ and the trough-line $\left\{x=\pi:y\in(-\infty,\eta(\pi)]\right\}$ we have $P_x=0$.
\end{thm}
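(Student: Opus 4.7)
The plan is to recast the claimed inequality as a positivity statement for a harmonic function. Writing $W := (u-c) - iv$ — holomorphic in $z = x+iy$ and non-vanishing on $\overline{\Omega_+}$ except at the wave-crest $(0, \eta(0))$ — Bernoulli's law \eqref{s3eq2} gives, in the interior,
\[ P_x = -\tfrac12\,\partial_x\bigl[(u-c)^2 + v^2\bigr] = -\tfrac12\,|W|^2\,\partial_x \log|W|^2. \]
Since $\log|W|^2$ is the real part of the holomorphic function $2\log W$, it is harmonic on $\Omega_+\setminus\{(0,\eta(0))\}$, and therefore so is $\partial_x \log|W|^2$. Because $|W|^2 > 0$ in the interior, the inequality $P_x < 0$ is equivalent to $\partial_x \log|W|^2 > 0$, which I would establish via a minimum principle.

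To that end, I would determine the boundary values of $\partial_x \log|W|^2$ on $\partial \Omega_+$. On the crest-line $\{x=0\}$ and the trough-line $\{x=\pi\}$, the reflection symmetries of $(u,v)$ force $|W|^2$ to be even in $x$ about each of these lines, so $\partial_x \log|W|^2 \equiv 0$; as $y\to-\infty$, the uniform decay \eqref{s2eq2} yields $|W|^2 \to c^2$ and hence $\partial_x \log|W|^2 \to 0$. The key computation is on $S_+$: because $P \equiv P_0$ there, differentiating along $S_+$ gives $P_x = -P_y\,\eta'$, and computing $P_y$ from Euler's equations \eqref{s2eq1} and eliminating $g$ via the identity \eqref{s4lem5pf3} derived in the proof of Lemma \ref{s4lem5} yields after simplification
\[ P_x\big|_{S_+} = (c-u)\, \frac{d}{dx}u\bigl(x,\eta(x)\bigr). \]
Since $\frac{d}{dx}u(x,\eta)<0$ on $(0,\pi)$ (Lemma \ref{s4lem5} and its proof) and $c-u>0$ off the crest, $P_x|_{S_+}<0$ strictly; equivalently $\partial_x \log|W|^2 > 0$ on $S_+$.

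With these boundary values in hand, I would apply the strong minimum principle to $\partial_x \log|W|^2$, regarded as a harmonic function on the excised conformal domain $\hat{\Omega}_+^{\varepsilon}$ of Figure \ref{excisedregion} (conformal invariance transfers harmonicity through the hodograph). The boundary values are nonnegative on the lateral and bottom sides, strictly positive on the truncated image of $S_+$, so the strong minimum principle produces $\partial_x \log|W|^2 > 0$ throughout $\hat{\Omega}_+^{\varepsilon}$. Passing to the limit by the uniform process of Section \ref{Sec4.1.1} (letting $\varepsilon\downarrow 0$ and $p_1\to -\infty$) propagates the strict inequality to all of $\Omega_+$ and $S_+$, yielding $P_x<0$ there. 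The final assertion $P_x = 0$ on the crest- and trough-lines is immediate from the reflection symmetries $x\mapsto -x$ and $x\mapsto 2\pi - x$ of $P$.

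The principal difficulty is controlling $\partial_x \log|W|^2$ on the small arc of $\partial \hat{\Omega}_+^{\varepsilon}$ around the stagnation point, where $|W|\to 0$ makes $\log|W|^2\to -\infty$ and its gradient singular, precluding a direct pointwise bound. The remedy is to carry out the argument first for the approximating regular Stokes waves of Section \ref{Sec4.1.1} (for which there is no stagnation and the minimum principle applies on the whole domain without excision), and then to transfer the inequality to the extreme wave via the $C^{0,\alpha}_{\text{per}}$ convergence $\gamma_n\to\gamma$ supplied there. A direct maximum principle on $P_x$ itself is unavailable — $P_x$ is not harmonic — which is why the reduction through the holomorphic quantity $\log W$ is essential.
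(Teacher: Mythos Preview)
Your approach is correct and, despite the different packaging, is essentially equivalent to the paper's. The paper writes $P_x = u_q/(h_q^2+h_p^2)$ via the hodograph transform and simply cites $u_q<0$ on $\Omega_+\cup S_+$ from \cite{Lyo2014}; but since $h_q^2+h_p^2 = 1/|W|^2$, your harmonic auxiliary satisfies $\partial_x\log|W|^2 = -2P_x/|W|^2 = -2u_q$, so your minimum-principle argument is precisely a rederivation of that cited inequality, carried out in physical rather than hodograph coordinates. What you gain is self-containment and a transparent reason (holomorphy of $\log W$) for why the relevant quantity is harmonic; what the paper gains is brevity by outsourcing the maximum-principle step to \cite{Lyo2014}. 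Your surface identity $P_x|_{S_+}=(c-u)\,\frac{d}{dx}u(x,\eta)$ is correct and is exactly the statement $u_q<0$ on $\hat S_+$, and your treatment of the crest/trough lines by symmetry and of the stagnation point by approximation with regular waves matches the paper's strategy.

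One point to tighten: on the truncated lower boundary $\{p=p_1\}$ you do not know the sign of $\partial_x\log|W|^2$ a priori, so the minimum principle cannot be applied directly there. The fix, as in the paper's proof of Theorem~\ref{s4thm2}, is to argue by contradiction: assume a negative interior value $-\delta$, use the uniform decay at depth to choose $p_1$ so that the function exceeds $-\delta$ on $\{p=p_1\}$, and then the interior minimum forces constancy.
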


\begin{proof}
The first equation in \eqref{s2eq1} along with the coordinate transformation given by equation \eqref{s3eq10}, ensure that in the moving frame the pressure satisfies
\begin{equation}\label{s4eq4}
\begin{split}
    P_x = (c-u)u_x-vu_y = \frac{u_q}{h_q^2+h_p^2},
\end{split}
\end{equation}
which is true throughout the fluid domain, except at the wave-crest where we have a stagnation point, making $u_q$ singular there. In the recent work \cite{Lyo2014} by the author, it was shown that $u_q<0$ in $\Omega_+$, and as such we conclude that $P_x<0$ therein.

Lemma \eqref{s4lem1} guarantees $f$ is continuous throughout $\overline{\Omega}$, while antisymmetry of $f$ ensures $f(0,y)=0$ for all $y\in(-\infty,\eta(0)]$. Continuity of $f$ along the boundary ensures that $f_y(0,y) = 0$ along the crest-line, and in particular at the wave-crest we observe
\begin{equation}\label{s4eq5}
    f_y(0,\eta(0)) = \lim_{y\to\eta(0)}\frac{f(0,\eta(0))-f(0,y)}{\eta(0)-y} = 0.
\end{equation}
Moreover, given $v=0$ along the crest-line it follows from equation \eqref{s2eq1} that
\begin{equation}\label{s4eq6}
    P_x(0,y)=-f_y(0,y)=0\quad\text{for }y\in(-\infty,\eta(0)],
\end{equation}
and so $P_x=0$ along the crest-line. Likewise along the trough-line we find $f(\pi,y)=-g\pi$, by antisymmetry and periodicity of $v$, and consequently we find $f_y(\pi,y)=0$ for all $y\in(-\infty,\eta(\pi)]$. Additionally, equation \eqref{s2eq1} yields
\begin{equation}\label{s4eq7}
    P_x(\pi,y) = -f_y(\pi,y)=0\quad\text{for }y\in(-\infty,\eta(\pi)],
\end{equation}
in which case we similarly deduce that $P_x=0$ along the trough-line. Along the free boundary section between the wave crest and wave trough, once again we use \eqref{s4eq4} to express
\begin{equation}\label{s4eq7a}
    P_x = \frac{u_q}{h_q^2+h_p^2}<0\quad\text{ along } S_+,
\end{equation}
having employed $u_q<0$ on $S_+$ to impose the last inequality, cf. \cite{Lyo2014}. The periodicity of $P$ now ensures that $P_x<0$ between any crest-line and subsequent trough-line within the fluid body and on its free boundary, except along these lines themselves, where we have $P_x=0$. Theorem \ref{s4thm1} is thus proved.
\end{proof}

\begin{thm}\label{s4thm2}
The pressure function increases with depth throughout the fluid body. That is to say
\begin{equation}\label{s4eq8}
    P_y<0\quad\text{throughout } \overline{\Omega}.
\end{equation}
\end{thm}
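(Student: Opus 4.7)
The plan is to parallel the proof of Theorem~\ref{s4thm1}: express $P_y$ in the hodograph variables, reduce the desired sign to an inequality on the conformal derivative $v_q$, verify it on the boundary of the excised domain $\hat\Omega^\varepsilon_+$, and then pass to the extreme wave via the uniform limiting process of Subsection~\ref{Sec4.1.1}. Differentiating Bernoulli's law \eqref{s3eq2} in $y$ and rewriting the result through \eqref{s3eq10}, I would first establish the identity
\[
P_y+g \;=\; (c-u)u_y-v\,v_y \;=\; \frac{v_q}{h_p^2+h_q^2}
\qquad\text{in }\Omega_+,
\]
so that the target $P_y<0$ is equivalent to $v_q<g(h_p^2+h_q^2)$, or equivalently $v_q\bigl[(c-u)^2+v^2\bigr]<g$ throughout $\overline{\hat\Omega}_+$.

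Next I would check this inequality on $\partial\Omega_+$. On $S_+$, the identity $P\equiv P_0$ gives $P_x+\eta'P_y=0$, and Theorem~\ref{s4thm1} together with the convexity-forced sign $\eta'<0$ on $(0,\pi)$ yields $P_y=-P_x/\eta'<0$. As $y\to-\infty$ the uniform decay in \eqref{s2eq2} produces $P_y\to -g$. On the crest-line $\{x=0\}$ and the trough-line $\{x=\pi\}$ the antisymmetry of $v$ forces $v\equiv 0$ and $v_y=-u_x\equiv 0$, so $P_y$ collapses to $-g+(c-u)u_y$, and the target reduces to the bound $(c-u)u_y<g$ along these two lines, which I would verify using the velocity-field results from \cite{Lyo2014}.

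Finally the boundary inequality must be transferred to the interior of $\hat\Omega^\varepsilon_+$, and this is the main obstacle: the quantity $v_q/(h_p^2+h_q^2)=P_y+g$ is not harmonic, and is not obviously sub- or superharmonic in either coordinate system, so no direct maximum principle is available. The route I would pursue is to work instead with $v_q$ itself---which is harmonic in $(q,p)$ since $v$ is---combined with the subharmonicity of $|W'|^2=(c-u)^2+v^2$ and Hopf-type considerations on the lateral sides $\{q=0\},\{q=c\pi\}$ where $v$ vanishes, so that any interior violation of $v_q\,[(c-u)^2+v^2]<g$ can be traced to a boundary violation that has already been ruled out. Once the inequality is established in $\hat\Omega^\varepsilon_+$, the uniform limiting process of Subsection~\ref{Sec4.1.1}, together with the $C^{0,\alpha}_{\text{per}}$-convergence of the approximating regular Stokes waves, transfers the conclusion to the extreme wave, giving $P_y<0$ throughout $\overline{\Omega}$.
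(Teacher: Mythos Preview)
Your boundary work is largely sound (the $S_+$ argument via $P_x+\eta'P_y=0$ is even cleaner than what the paper does there), but the interior step has a real gap. You correctly note that $P_y+g=v_q\,[(c-u)^2+v^2]$ is not harmonic, and then propose to combine the harmonicity of $v_q$ with the subharmonicity of $(c-u)^2+v^2$. That combination does not yield any maximum principle: products of harmonic and subharmonic functions need not be sub- or superharmonic, so there is no mechanism to ``trace an interior violation to a boundary violation.'' Moreover, your fallback on the lateral edges---verifying $(c-u)u_y<g$ along the crest-line from the results of \cite{Lyo2014}---is not actually contained in those results; it requires an additional argument.

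The missing idea is an auxiliary \emph{harmonic} function that encodes $P_y$. The paper uses $f(x,y)=(c-u)v-gx$, which is harmonic because $(c-u)v$ is (half) the imaginary part of the square of the complex velocity. One first shows $f\le0$ on $\partial\Omega_+$ (this is where Lemma~\ref{s4lem5} and Corollary~\ref{s4cor1} enter), runs a maximum principle in the excised domain to get $f<0$ inside, and then Hopf on the crest-line gives exactly $f_x(0,y)=(c-u)u_y-g<0$---the inequality you needed. For the interior of $\Omega_+$ the paper applies a second maximum principle, this time to the harmonic function $f_q$, obtaining $f_q<0$; unwinding this gives $v_x[(c-u)^2+v^2]<g(c-u)$, and combining it with the known $u_q<0$ from \cite{Lyo2014} yields $P_y<0$ throughout. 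So the route to a maximum principle is not through $v_q$ and $|W'|^2$ separately, but through the single harmonic combination $(c-u)v-gx$.
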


\begin{proof}
From Lemma \eqref{s4cor1} we observe that $f<0$ along $S_+$. Meanwhile the anti-symmetry and periodicity of $v$ imply $f(0,y)=0$ for $y\in(-\infty,\eta(0)]$ and $f(\pi,y)=-g\pi$ for $y\in(-\infty,\eta(\pi)]$, and so we conclude $f\leq 0$ along the boundary $\partial{\Omega}_+$. The incompressibility and irrotationality of the flow, as given by equation \eqref{s2eq3}, ensure that the function $f(x,y)$ is harmonic in the interior of the fluid domain $\Omega$, in which case $f$ must attain it maximum and minimum in $\overline{\Omega}_+$ along the boundary $\partial\Omega_+$. Maximum principles then ensure that $f<0$ on $\Omega_+$. To demonstrate this we first note that the transformation given by equation \eqref{s3eq10} is conformal within $\Omega_+$, and since $f$ is harmonic in the interior of $\Omega_+$ it follows that $f$ is also harmonic in the interior of $\hat{\Omega}_+$.

Assuming there exists an interior point $(q_0,p_0)\in\hat{\Omega}_+$ where $f(q_0,p_0)=0$, we choose $\varepsilon\in(0,\sqrt{q_0^2+p_0^2})$ and excise a quarter disc of radius $\varepsilon$ centered at $(q,p)=(0,0)$ from the conformal domain, yielding the excised conformal domain $\hat{\Omega}^{\varepsilon}_{+}$ (see Figure \ref{excisedregion}). Having eliminated the stagnation point, the function $f$ is now smooth throughout the closure of ${\hat{\Omega}}^{\varepsilon}_{+}$ and we also have the necessary regularity of the boundary to apply maximum principles. Since $(q_0,p_0)$ is an interior point of $\hat{\Omega}_{+}^{\varepsilon}$ where $f$ achieves its maximum value, then Hopf's maximum principle requires that $f\equiv0$ throughout the excised conformal domain, which is not the case. Thus $f\neq0$ in the interior $\hat{\Omega}_{+}^{\varepsilon}$. Moreover, if there exists a point $(q_0,p_0)\in\hat{\Omega}_{+}^{\varepsilon}$ such that $f(q_0,p_0)=\gamma>0$,  yet again we encounter a contradiction of Hopf's maximum principle. Owing to the boundary conditions \eqref{s2eq2}, it is always possible to choose $p_1<p_0$ such that $f(q,p_1)<\gamma$ uniformly in $q$. Considering the behaviour of $f$ on the excised conformal domain truncated by the horizontal line $p=p_1$, which we denote by $\hat{\Omega}_{+}^{\varepsilon,p_1}$(see Figure \ref{excisedregion}), Hopf's maximum principle would require $f\equiv\gamma$ throughout, since the maximum value of $f$ lies in the interior $\hat{\Omega}_{+}^{\varepsilon,p_1}$. However, it is known that $f$ is not constant. Thus we conclude $f<0$ in $\hat{\Omega}_+^{\varepsilon}$ and the uniform limiting process (cf. Section \ref{Sec4.1.1}) then ensures $f<0$ throughout the interior of $\hat{\Omega}_+$. Since the hodograph transform is conformal in the interior $\hat{\Omega}_+$, we also conclude that $f<0$ in $\Omega_+$.

Since $f<0$ in the interior and $f=0$ on the crest-line of $\Omega_+$, then along the lateral edge $\left\{x=0:y\in(-\infty,\eta(0))\right\}$, the strong maximum principle ensures
\begin{equation}\label{s4eq9}
    f_x(0,y)< 0 \qquad \text{for all }\ y\in(-\infty,\eta(0)).
\end{equation}
However along the crest-line we also obtain from equation \eqref{s2eq1}
\[P_y(0,y) = f_x(0,y)\qquad\text{for all } y\in(-\infty,\eta(0))\]
since antisymmetry ensures $v(0,y)=0$ for $y\in(-\infty,\eta(0)]$. It is immediately clear that as we descend the crest-line the pressure increases, that is to say $P_y(0,y)<0$ for all $y\in(-\infty,\eta(0))$. Antisymmetry and periodicity of $v$ ensure that along the trough-line we also have $v(\pi,y)=0$ in which case the strong maximum principle requires $v_x(\pi,y)<0$ where $y\in(-\infty,\eta(\pi))$.  Thus equation \eqref{s2eq1} yields
\begin{equation}\label{s4eq10}
    P_y(\pi,y) = (c-u(\pi,y))v_x(\pi,y)-g<0\quad \text{for } y\in(-\infty,\eta(\pi)),
\end{equation}
since relation \eqref{s2eq4} requires $c-u>0$ along the trough-line. Thus $P$ is strictly increasing as we descend the trough-line.

Next we apply maximum principles to $f_q$ in the excised region $\hat{\Omega}_+^{\varepsilon}$ and by the uniform limiting process (cf. \ref{Sec4.1.1}) we will deduce $f_q<0$ in $\Omega_+$. We note that $f$ and $f_q$ are harmonic in $\hat{\Omega}_+$. Equations \eqref{s3eq10}, \eqref{s4eq2}, \eqref{s4lem5pf1} and \eqref{s4lem5pf3} ensure
\begin{equation}\label{s4eq11}
    f_q = \frac{\frac{d}{dx}f(x,\eta(x))}{(c-u)(1+\eta^{\prime\, 2})}<0\quad \text{for } x\in(0,\pi).
\end{equation}
Consequently, we have $f_q<0$ on $\hat{S}_+$ also. Meanwhile, on the lateral edges of $\hat{\Omega}_+^{\varepsilon}$ we have $v=0$, and as such
\begin{equation}\label{s4eq12}
    f_q = v_x - \frac{g}{c-u} = \frac{P_x}{c-u}= 0
\end{equation}
along the line segments $\left\{(0,p):p\in(-\infty,-\varepsilon)\right\}$ and $\left\{(c\pi,p):p\in(-\infty,0]\right\}$,
since we previously found $P_x=0$ along the crest-line and the trough-line. Considerations similar to those which ensured $f<0$ in the interior of the excised conformal domain now ensure $f_q<0$ in $\hat{\Omega}_+^{\varepsilon}$ also, while the uniform limiting process (cf. \ref{Sec4.1.1}) extends this inequality to the domain $\hat{\Omega}_+$. Since the hodograph transform $\mathcal{H}$ is conformal in the open rectangle $\hat{\Omega}_+$, this also guarantees $f_q<0$ in $\Omega_+$ as claimed.

With $f_q<0$ in $\Omega_+$, we note that
\begin{equation*}
    f_q = \frac{(c-u)^2}{(c-u)^2+v^2}\left[v_x\left(1+\frac{v^2}{(c-u)^2}\right)-\frac{g}{c-u}\right]<0\quad\text{in } {\Omega}_+
\end{equation*}
from which we immediately deduce
\begin{equation}\label{s4eq13}
u_y < \frac{g(c-u)}{(c-u)^2+v^2}\quad\text{in }\Omega_+,
\end{equation}
having also imposed the irrotationality condition \eqref{s2eq2}.
Moreover with the conformal change of variables in equation \eqref{s3eq10} we observe
\begin{equation}\label{s4eq14}
    u_q = h_pu_x+h_qu_y=\frac{(c-u)u_x-vu_y}{(c-u)^2+v^2}<0 \quad \text{in }\Omega_+,
\end{equation}
since $u_q<0$ in $\Omega_+$, cf. \cite{Lyo2014}. Combining the relations in \eqref{s4eq13} and \eqref{s4eq14} yields the inequality
\begin{equation}
    v_y>\frac{vv_x}{c-u}\quad \text{in }\Omega_+
\end{equation}
which replaced in equation \eqref{s2eq1} ensures
\begin{equation}
    P_y = -g + (c-u)v_x-vv_y < -g + \frac{v_x}{c-u}\left[(c-u)^2+v^2\right]<0 \quad \text{in }\Omega_+.
\end{equation}
Thus $P$ is strictly decreasing a we descend into to fluid domain throughout $\Omega_+$.

Lastly equation \eqref{s2eq1} ensures
\begin{equation}
\Delta P = - 2(u_x^2+u_y^2)\leq 0 \quad \text{in } \Omega,
\end{equation}
and as such $P$ is super-harmonic in the fluid domain in which case $P$ attains its minimum on the boundary of $\Omega_+$. Furthermore the second boundary condition in \eqref{s2eq2} requires
\[\lim_{y\to-\infty}P_y = -g\quad\text{uniformly in }x.\]
Thus we find that $P$ attains its minimum on the upper boundary of $\Omega$. Moreover, since $P$ is in fact constant on the free surface Hopf's maximum principle ensures
\begin{equation}
P_y<0\quad\text{on } y=\eta(x)\text{ for all }x.
\end{equation}
Owing to the symmetry of $P$ with respect to $x$, we also deduce that $P_y<0$ in $\overline{\Omega}_-$ and by periodicity throughout the fluid body $\overline{\Omega}$, thus proving Theorem \ref{s4thm2}.
\end{proof}

\section*{Acknowledgements}
The author is grateful to the referees for several helpful comments.


\end{document}